\documentclass{article}
\usepackage{amsmath}
\usepackage{graphicx}
\usepackage{amssymb}
\usepackage{amsthm}
\usepackage[english]{babel}
\usepackage{pgf}
\usepackage{pifont}  
\usepackage{paralist}
\usepackage{tikz}
\usetikzlibrary{intersections}



\newcommand{\lrbrace}[1]{\left \lbrace #1 \right \rbrace }
\newcommand{\lrrb}[1]{\left ( #1 \right ) }
\newcommand{\skp}[1]{ \left \langle #1 \right \rangle }
\newcommand{\abs}[1]{\left| #1 \right|}

\newcommand{\te}[1]{\text{#1}}

\newcommand{\set}[1]{\mathbb{#1}}


\newtheorem{theorem}{Theorem}[section]
  
  \newtheorem{proposition}[theorem]{Proposition}
  \newtheorem{lemma}[theorem]{Lemma}
  \newtheorem{corollary}[theorem]{Corollary}

\newcommand{\R}{\mathbb{R}}
\newcommand*\Hess{\mathop{\mathrm{Hess}}\nolimits}
\newcommand*\graph{\mathop{\mathrm{graph}}\nolimits}
\newcommand{\ga}{\gamma}
\newcommand{\pl}[2]{{\frac{\partial #1}{\partial #2}}}
\newcommand{\grad}{\nabla}
\newcommand{\embed}{\hookrightarrow}
\newcommand{\cone}{{\cal C}_N}


\begin{document}

\author{Sebastian Helmensdorfer, Peter Topping}
\title{The Geometry of Differential Harnack Estimates}
\date{\today\footnote{Paper invited by \emph{Actes du s\'eminaire Th\'eorie spectrale et g\'eom\'etrie}, Universit\'e de Grenoble.}}
\maketitle

\begin{abstract}
In this short note, we hope to give a rapid induction for non-experts into the world of Differential Harnack inequalities, which have been so influential in geometric analysis and probability theory over the past few decades. At the coarsest level, these are often mysterious-looking inequalities that hold for `positive' solutions of some parabolic PDE, and can be verified quickly by grinding out a computation and applying a maximum principle. In this note we emphasise the geometry behind the Harnack inequalities, which typically turn out to be  assertions of the convexity of some natural object. As an application, we explain how Hamilton's Differential Harnack inequality for mean curvature flow of a $n$-dimensional submanifold  of $\R^{n+1}$ can be viewed as following directly from
the well-known preservation of convexity under mean curvature flow, but this time of a $(n+1)$-dimensional submanifold of $\R^{n+2}$. We also briefly survey the earlier work that led us to these observations.
\end{abstract}

\section{Introduction}

Perhaps the simplest situation in which to explain Differential Harnack Estimates is that of the ordinary, scalar, linear heat equation on Euclidean space.
Let $u: \set{R}^n \times (0,T] \rightarrow (0, \infty)$ be a positive bounded solution of 
  \begin{equation}
    \label{eq:heatequ}
    \frac{ \partial u}{ \partial t }  = \triangle u.
  \end{equation}
R. Hamilton's matrix Harnack estimate \cite{hamiltonhe}, restricted to this special case, says that the solution $u$ 
satisfies the following differential inequality:
\begin{theorem}
\label{mhe}
For each $t\in (0,T]$, any positive, bounded solution $u$ to the heat equation satisfies
  \begin{equation}
    \label{eq:logconvheatequ}
    \Hess \lrrb{ \log u } + \frac{ I }{ 2t } \geq 0
  \end{equation}  
  where $I$ denotes the identity matrix. 
 \end{theorem}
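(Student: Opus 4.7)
The plan is to set $f := \log u$, derive a closed parabolic equation for the \emph{Harnack tensor}
\begin{equation*}
N_{ij} := f_{ij} + \frac{\delta_{ij}}{2t},
\end{equation*}
and apply Hamilton's tensor maximum principle to conclude that $N \geq 0$ as a symmetric matrix. This is the standard PDE-theoretic route; the geometric interpretation promised in the introduction is presumably the author's own proof.

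First I would derive the evolution of $f$. Since $u > 0$ solves $u_t = \Delta u$, the function $f$ obeys the semilinear equation
\begin{equation*}
\pl{f}{t} = \Delta f + |\nabla f|^2.
\end{equation*}
Differentiating twice in space and using the commutativity of partial derivatives, the Hessian $h_{ij} := f_{ij}$ satisfies
\begin{equation*}
(\partial_t - \Delta) h_{ij} - 2 \sum_k f_k \, \partial_k h_{ij} = 2 \sum_k h_{ik} h_{jk}.
\end{equation*}
Substituting $h = N - \frac{1}{2t} I$ into this identity and letting the $\pl{}{t}\lrrb{\frac{1}{2t}}$ contribution cancel the algebraic cross term, one arrives at the clean evolution
\begin{equation*}
(\partial_t - \Delta) N_{ij} - 2 \sum_k f_k \, \partial_k N_{ij} = 2 \sum_k N_{ik} N_{jk} - \frac{2}{t} N_{ij}.
\end{equation*}

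The crucial point is the algebraic form of the reaction on the right-hand side: at any null direction $v$ of $N$ (i.e.\ $N v = 0$, which is forced at a first failure of positivity) both $N_{ik} N_{jk} v^i v^j = |N v|^2$ and $N_{ij} v^i v^j$ vanish. The reaction is therefore non-negative on the boundary of the PSD cone, which is precisely the hypothesis of Hamilton's tensor maximum principle, allowing one to conclude that $N \geq 0$ is preserved in time.

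The main obstacle is twofold. The problem is posed on $(0,T]$ with no initial data at $t = 0$, but this is resolved by the observation that as $t \to 0^+$ the summand $\frac{I}{2t}$ dominates any Hessian bound for $f$ on compact sets (such bounds being supplied locally by parabolic regularity together with positivity and boundedness of $u$), so one may start the argument from an arbitrarily small $t_0 > 0$. More seriously, $\set{R}^n$ is non-compact, so the infimum of the least eigenvalue of $N$ may fail to be attained. I would handle this by localizing with a spatial cutoff and by a perturbation of the form $\varepsilon(1 + |x|^2 + A t)\, I$ with $A$ large, so as to force any first loss of positivity to occur at an interior point and yield a contradiction with the PDE above; letting $\varepsilon \to 0$ then finishes the argument. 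Making this last step rigorous, in a way that genuinely uses the hypothesis that $u$ is bounded to control the growth of $|\nabla f|$ at infinity, is the most delicate part of the plan.
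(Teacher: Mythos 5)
Your proposal follows Hamilton's original tensor-maximum-principle route, and the core computation is correct: the Harnack matrix $N_{ij}=f_{ij}+\delta_{ij}/(2t)$ evolves under the drifted heat operator with reaction $2N_{ik}N_{jk}-\tfrac{2}{t}N_{ij}$, which vanishes at any null direction of $N$, so the null-eigenvector condition needed for a tensor maximum principle is indeed met.

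The paper proves the theorem by a genuinely different and more elementary mechanism. Using the heat-kernel representation $u(x,t)=\int u(y,0)\rho(x-y,t)\,dy$, it writes $F:=u/\rho$ as $\int u(y,0)G(x,y)\,dy$ with $G(x,y)=\rho(x-y,t)/\rho(x,t)$; since each $G(\cdot,y)$ is log-affine, H\"older's inequality shows immediately that $F$ is log-convex, which is equivalent to \eqref{eq:logconvheatequ}. This avoids the maximum principle entirely and delivers for free the geometric reading (Harnack estimate as convexity of $\log(u/\rho)$) that motivates the rest of the paper, whereas your route reproduces Hamilton's historical proof and inherits its technical overhead.

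Two steps of your plan as written would not go through. First, the claim that ``$I/(2t)$ dominates any Hessian bound for $f$ on compact sets as $t\to 0^+$'' is not available: $\Hess(\log u)$ can itself degrade like $-I/(2t)$ as $t\to 0$ (as $\log\rho$ shows), so the asserted dominance is essentially the content of the theorem and cannot be used to seed it. The correct device is to prove $\Hess(\log u)+I/(2(t-t_0))\geq 0$ for $t>t_0>0$, where one does have honest interior $C^2$ bounds on $\log u$ at time $t_0$ and the added term blows up as $t\to t_0^+$, and then let $t_0\to 0$. Second, the perturbation $\varepsilon(1+|x|^2+At)I$ is not a supersolution of your evolution equation: the drift $-2f_k\partial_k$ applied to $|x|^2$ produces $-4\varepsilon\langle\nabla f,x\rangle$, and for a general bounded positive solution $\nabla\log u$ can grow linearly in $|x|$ (consider a Gaussian), so this contribution grows quadratically at infinity and cannot be absorbed by any fixed $A$. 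A faster-growing barrier, or a cutoff argument backed by quantitative gradient and decay estimates, is required; you correctly flag this as delicate, but it is more than a routine localization.
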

Before we try to understand the geometry behind this inequality, let us try to understand why it is so useful.
Taking the trace of (\ref{eq:logconvheatequ})  yields the following special case of the seminal inequalities of Li-Yau \cite{liyau}, which predate the work of Hamilton:
\begin{equation}
  \nonumber \triangle \lrrb{ \log u } + \frac{ n }{ 2t } \geq 0,
\end{equation}
and by rewriting the heat equation \eqref{eq:heatequ}
as
\begin{equation}
  \nonumber \frac{ \partial }{ \partial t } \log u  = \triangle \lrrb{ \log u } + \abs{ \nabla \lrrb{ \log u } }^2,
\end{equation}
we find:

\begin{corollary}
  \label{thm:harnackheat}
  For each $t\in (0,T]$, any positive, bounded solution $u$ to the heat equation satisfies
  \begin{equation}
    \label{eq:harnackheat}
    \frac{ \partial }{ \partial t } \log u - \abs{\nabla \lrrb{ \log u } }^2 + \frac{ n }{ 2t } \geq 0.
  \end{equation}
\end{corollary}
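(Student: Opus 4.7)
The plan is to reduce everything to Theorem \ref{mhe}, since that matrix inequality already contains all the analytic content (the maximum principle argument, etc.). The corollary should therefore be a purely algebraic consequence of Theorem \ref{mhe} together with the heat equation \eqref{eq:heatequ} itself, as the text immediately preceding the statement already suggests.

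First I would take the trace of the matrix inequality $\Hess(\log u) + I/(2t) \geq 0$. Since the trace preserves the ordering of symmetric matrices, and since $\mathrm{tr}\,\Hess(\log u) = \triangle(\log u)$ while $\mathrm{tr}\,I = n$, this immediately yields the scalar Li--Yau bound
\begin{equation*}
  \triangle(\log u) + \frac{n}{2t} \geq 0.
\end{equation*}

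Next I would convert the spatial Laplacian of $\log u$ into the combination of $\partial_t \log u$ and $|\nabla \log u|^2$ that appears in the statement. By the chain rule,
\begin{equation*}
  \frac{\partial}{\partial t}\log u = \frac{1}{u}\,\frac{\partial u}{\partial t} = \frac{\triangle u}{u},
\end{equation*}
where the second equality uses \eqref{eq:heatequ}. On the other hand, a direct computation (again chain rule) gives
\begin{equation*}
  \triangle(\log u) = \frac{\triangle u}{u} - \abs{\nabla \log u}^2.
\end{equation*}
Combining these two identities produces
\begin{equation*}
  \frac{\partial}{\partial t}\log u = \triangle(\log u) + \abs{\nabla \log u}^2,
\end{equation*}
so that the traced estimate above is exactly \eqref{eq:harnackheat}.

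There is essentially no obstacle in this argument: the deep work is already encapsulated in Theorem \ref{mhe}, and the corollary is obtained by a trace followed by a one-line manipulation of the heat equation. The only thing to be mindful of is that the $|\nabla \log u|^2$ term enters with a favorable sign, which is precisely why trading $\triangle u / u$ for $\partial_t \log u$ does not destroy the inequality.
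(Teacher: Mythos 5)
Your argument is correct and matches the paper's own proof step for step: take the trace of the matrix Harnack inequality to get $\triangle(\log u) + n/(2t) \geq 0$, and then substitute the identity $\triangle(\log u) = \partial_t \log u - |\nabla \log u|^2$ (which is just the heat equation rewritten for $\log u$). The only slight imprecision is your closing remark about a ``favorable sign'': since the substitution is an exact identity rather than a further estimate, there is no sign to worry about --- the two forms of the inequality are equivalent, not one weaker than the other.
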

In particular, this implies that 
$\frac{ \partial }{ \partial t } \log u \geq -  \frac{ n }{ 2t }$,
i.e. that $u$ cannot decrease too fast. Note that we have managed to get rid of all spatial derivatives of $u$.
To extract the greatest possible amount of information from 
Corollary \ref{thm:harnackheat}, pick two times $0<t_1<t_2\leq T$, and consider a smooth path $\ga:[t_1,t_2]\to\R^n$ from $x_1\in\R^n$ to $x_2\in\R^n$. We may then compute, also using Young's inequality,
\begin{equation}
\begin{aligned}
\frac{d}{dt}\log u (\ga(t),t) & = \pl{\log u}{t}+\langle\grad(\log u),\dot\ga\rangle\\
&\geq \left(\abs{\nabla \lrrb{ \log u } }^2 - \frac{ n }{ 2t }\right)
- \left(\abs{\nabla \lrrb{ \log u } }^2 + \frac{1}{4}|\dot\ga|^2
\right)\\
&=- \frac{ n }{ 2t } - \frac{1}{4}|\dot\ga|^2,
\end{aligned}
\end{equation}
and then integrate to find that
$$\log u(x_2,t_2)-\log u(x_1,t_1)\geq -\frac{n}{2}\log\left(\frac{t_2}{t_1}\right)-\frac{1}{4}\int_{t_1}^{t_2}|\dot\ga(t)|^2dt.$$
Because there is no longer any mention of $\ga$ on the left-hand side, we may now optimise this inequality by taking $\ga$ to be the minimising geodesic from $x_1$ to $x_2$, i.e. a straight line, 
to obtain
$$\log u(x_2,t_2)-\log u(x_1,t_1)\geq -\frac{n}{2}\log\left(\frac{t_2}{t_1}\right)-\frac{|x_2-x_1|^2}{4(t_2-t_1)},$$
and we have proved a classical Harnack estimate:
\begin{corollary}
  \label{cor:harnackheat}
  For $0 < t_1 < t_2 \leq T$ and $x_1,x_2 \in \set{R}^n$, any positive, bounded solution $u:\R^n\times (0,T]\to (0,\infty)$ to the heat equation satisfies:

  \begin{equation}
    \nonumber
     u \lrrb{ x_2, t_2 } \geq  u \lrrb{ x_1, t_1 } \lrrb{ \frac{ t_1 } { t_2 } }^{ \frac{ n }{ 2 } } \exp \lrrb{{- \frac{ \abs{x_2 - x_1}^2 }{ 4(t_2-t_1) } }}.
  \end{equation} 

\end{corollary}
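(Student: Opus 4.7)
The plan is to apply Corollary \ref{thm:harnackheat} pointwise and then integrate along a judiciously chosen path joining $(x_1,t_1)$ to $(x_2,t_2)$ in spacetime. To this end I would pick any smooth curve $\ga:[t_1,t_2]\to\R^n$ with $\ga(t_1)=x_1$ and $\ga(t_2)=x_2$, and compute $\frac{d}{dt}\log u(\ga(t),t)$ by the chain rule. This produces a term $\pl{\log u}{t}$, which Corollary \ref{thm:harnackheat} bounds from below by $|\grad(\log u)|^2 - \frac{n}{2t}$, together with an inner product $\skp{\grad(\log u),\dot\ga}$ of indeterminate sign.

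The key trick, which I expect to be the main (though mild) obstacle, is to select the constant in Young's inequality so that the uncontrolled gradient term is annihilated. Applying $\skp{\grad(\log u),\dot\ga}\geq -|\grad(\log u)|^2-\tfrac14|\dot\ga|^2$ makes the $|\grad(\log u)|^2$ contributions cancel exactly, leaving a clean lower bound
\begin{equation*}
\frac{d}{dt}\log u(\ga(t),t)\geq -\frac{n}{2t}-\frac{1}{4}|\dot\ga(t)|^2
\end{equation*}
that depends on $\ga$ only through its speed.

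Next I would integrate this inequality from $t_1$ to $t_2$, obtaining
\begin{equation*}
\log u(x_2,t_2)-\log u(x_1,t_1)\geq -\frac{n}{2}\log\lrrb{\frac{t_2}{t_1}}-\frac{1}{4}\int_{t_1}^{t_2}|\dot\ga(t)|^2\,dt.
\end{equation*}
Since the left-hand side is independent of $\ga$, I am free to optimize over admissible paths: the quantity $\int|\dot\ga|^2$ is minimized, for fixed endpoints and fixed parameter interval, by the constant-speed straight line from $x_1$ to $x_2$, for which this integral equals $|x_2-x_1|^2/(t_2-t_1)$. Exponentiating the resulting inequality yields the claimed form of the classical Harnack estimate.
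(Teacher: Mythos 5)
Your proposal is correct and follows essentially the same approach as the paper: parametrize a spacetime path, apply the differential Harnack estimate together with Young's inequality to cancel the gradient terms, integrate in time, and then optimize over paths (with the straight line at constant speed as the minimizer). The paper's proof is identical in structure and in all its choices.
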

This beautiful estimate tells us that positive solutions cannot decrease too quickly as time advances, even if we move a little in space. Moreover it is sharp, as can be seen by considering the fundamental solution of the heat equation
\begin{equation}
\rho(x,t)=\frac{1}{(4\pi t)^{n/2}}\exp\left(-\frac{|x|^2}{4t}\right).
\end{equation}
In particular, this sharpness is manifested in the fact that $\rho$ achieves equality in \eqref{eq:logconvheatequ}:
\begin{equation}
\label{rhoeq}
\Hess \lrrb{ \log \rho } + \frac{ I }{ 2t } \equiv 0.
\end{equation}
If we now subtract \eqref{rhoeq} from \eqref{eq:logconvheatequ}, we obtain the following simple geometric rephrasing of Theorem \ref{mhe}, which is entirely in the spirit of this note.
\begin{corollary}
\label{harn_cor}
For $t\in (0,T]$, the function $\log\left(\frac{u}{\rho}\right)$ is \emph{convex}.
\end{corollary}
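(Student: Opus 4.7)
The plan is essentially to take the matrix inequality of Theorem \ref{mhe} and subtract the equality \eqref{rhoeq} from it, using linearity of the Hessian. Since convexity of a function on $\R^n$ is equivalent to nonnegativity of its (spatial) Hessian, the conclusion will drop out in one line once both ingredients are in hand.

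First I would verify \eqref{rhoeq} by direct computation. Writing
$$\log\rho(x,t) = -\tfrac{n}{2}\log(4\pi t) - \tfrac{|x|^2}{4t},$$
the $t$-dependent constant disappears under the spatial Hessian and the quadratic term in $x$ contributes exactly $-I/(2t)$, so $\Hess(\log\rho) + I/(2t) \equiv 0$ as claimed. (Put differently, the correction $I/(2t)$ in Theorem \ref{mhe} is forced by the requirement that the fundamental solution be the extremal case.) Next, Theorem \ref{mhe} applied to the given positive bounded solution $u$ gives
$$\Hess(\log u) + \frac{I}{2t} \geq 0.$$
Subtracting \eqref{rhoeq} from this inequality cancels the $I/(2t)$ term and, by linearity of the Hessian, yields
$$\Hess\bigl(\log(u/\rho)\bigr) = \Hess(\log u) - \Hess(\log\rho) \geq 0,$$
which is exactly the statement that $x\mapsto \log(u(x,t)/\rho(x,t))$ is convex on $\R^n$ for each fixed $t\in(0,T]$.

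There is really no obstacle here: the corollary is essentially a reformulation of Theorem \ref{mhe} once one observes that the extremal solution $\rho$ achieves equality. The only thing worth noting is that the $t$-dependent normalisation $-\tfrac{n}{2}\log(4\pi t)$ in $\log\rho$ is annihilated by the spatial Hessian, so the subtraction is completely clean and no delicate estimate is needed.
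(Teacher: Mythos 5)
Your proof is correct as far as it goes: subtracting the exact identity \eqref{rhoeq} from Hamilton's inequality \eqref{eq:logconvheatequ} immediately gives $\Hess(\log(u/\rho))\geq 0$, and your verification of \eqref{rhoeq} from the explicit formula for $\log\rho$ is fine. However, this is precisely the one-line derivation the paper already carries out in the text \emph{before} stating Corollary \ref{harn_cor}; it is not what the paper presents as the proof of the corollary.

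The paper's proof is deliberately different: it does \emph{not} invoke Theorem \ref{mhe} at all. Instead it writes $F(x):=u(x,t)/\rho(x,t)=\int_{\R^n}u(y,0)\,G(x,y)\,dy$ with $G(x,y)=\rho(x-y,t)/\rho(x,t)$, observes that each $G(\cdot,y)$ is log-affine (hence log-convex), and then deduces $F(\alpha x+(1-\alpha)z)\leq F(x)^\alpha F(z)^{1-\alpha}$ directly from this pointwise log-convexity together with H\"older's inequality applied to the measure $u(y,0)\,dy$. The point of doing it this way is that, since Corollary \ref{harn_cor} is equivalent to Theorem \ref{mhe} (each is obtained from the other by adding or subtracting \eqref{rhoeq}), the representation-formula argument yields an independent, maximum-principle-free proof of Hamilton's matrix Harnack inequality itself, reducing it to the elementary fact that a positive mixture of log-affine functions is log-convex. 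Your route gets the statement, but it presupposes the very theorem whose proof the corollary is meant to illuminate, so it misses the purpose of the passage.
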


In fact, with this formulation one can reduce the proof of the full matrix Harnack inequality in this case to the fact that the sum of  log-convex functions is again log-convex:

\begin{proof} (Corollary \ref{harn_cor}.)
By translating time by an arbitrarily small amount, we may assume that our solution is smooth on the whole of $\R^n\times [0,T]$, and $u(\cdot,0)$ is a positive function.
Fix $t>0$, and write
$$u(x,t)=\int_{\R^n} u(y,0)\rho(x-y,t)dy,$$
or alternatively 
$$F(x):=\frac{u(x,t)}{\rho(x,t)}=
\int_{\R^n} u(y,0)G(x,y)dy,$$
where $G(x,y):=\frac{\rho(x-y,t)}{\rho(x,t)}$.
We must therefore prove that $F$ is log-convex, i.e. that for all $x,z\in\R^n$ and $\alpha\in (0,1)$, we have 
$$F(\alpha x + (1-\alpha)z)\leq F(x)^\alpha F(z)^{1-\alpha}.$$
But for all $y\in\R^n$, the function $G(\cdot,y)$ is log-convex (even log-affine) and thus
\begin{equation}
\begin{aligned}
F(\alpha x + (1-\alpha)z)
&=\int_{\R^n} u(y,0)G(\alpha x + (1-\alpha)z,y)dy\\
&\leq\int_{\R^n} u(y,0)G(x,y)^\alpha G(z,y)^{1-\alpha}dy\\
&\leq
\left(\int_{\R^n}u(y,0)G(x,y)dy\right)^\alpha
\left(\int_{\R^n}u(y,0)G(z,y)dy\right)^{1-\alpha}\\
&= F(x)^\alpha F(z)^{1-\alpha}
\end{aligned}
\end{equation}
by H\"older.
\end{proof}

\section{The Differential Harnack estimate for mean curvature flow}

Now we consider solutions of another heat equation, namely of the mean curvature flow (see \cite{eckermcf}). Let $\lrrb{ M_t }_{t \in [0, T]} \subset \set{ R }^{n+1}$, $M_t = F_t \lrrb{ M^n }$ be a family of smoothly immersed hypersurfaces, satisfying the nonlinear PDE

\begin{equation}
  \label{eq:mcfequation}
  \frac{ \partial }{ \partial t } F_t = \vec{H} = - H \nu
\end{equation}
where $\vec{H}$ is the mean curvature vector, $\nu$ is a choice of unit normal and $H$ the corresponding mean curvature of $M_t$. We are especially interested in convex initial data $M_0$. Convexity is preserved along the flow and compact convex solutions shrink to a point at a finite time $T_{\max}$ (see \cite{huisken}).

We are also particularly interested in so-called \emph{self-expanders} of the mean curvature flow, which are hypersurfaces $M_1$ 
of Euclidean space that  solve the self-expander equation
\begin{equation}
  \label{eq:mcfselfexpander}
  H + \frac{ \skp{ x, \nu }}{ 2 } = 0
\end{equation}
on $M_1$, where again, $\nu$ is the unit normal such that $\vec{H}=-H\nu$. The family of hypersurfaces $M_t = \sqrt{ t } M_1$, $t > 0$, then provides a \emph{self-expanding} solution of (\ref{eq:mcfequation}) up to tangential diffeomorphisms (see \cite{eckermcf}).

Under the mean curvature flow, the mean curvature itself satisfies a parabolic equation, and one might hope to prove a Harnack inequality for $H$ under some sort of positivity condition.
R. Hamilton \cite{hamiltonmcf} showed that the correct positivity hypothesis is to ask for the solutions to be \emph{convex}, and proved an estimate that we state here only for compact submanifolds.
\begin{theorem}[Hamilton \cite{hamiltonmcf}]
  \label{thm:harnackmcf}
  Let $\lrrb{ M_t }_{t \in [0, T]}$ be a compact solution of (\ref{eq:mcfequation}) such that $M_0$ is convex. Then all $M_t$ are convex and satisfy for $t \in(0,T]$
  \begin{equation}
    \label{eq:harnackmcf}
    Z\lrrb{V, V} := \frac{ \partial H}{ \partial t }  + 2 \skp{ \nabla H, V } + h \lrrb{ V, V } + \frac{ H }{ 2 t } \geq 0
  \end{equation}
for any tangent vector $V$, where $\nabla$, $h$ and $\skp{ \cdot, \cdot }$ denote the induced connection on $M^n$, the second fundamental form of $M_t$ and the Euclidean inner product respectively. 
\end{theorem}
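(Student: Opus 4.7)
My plan is to follow the philosophy of Section 1: find a geometric object attached to the flow whose convexity is equivalent to the Harnack estimate. Here the right notion of ``positivity'' is already genuine convexity, as Hamilton noticed, and the analogues of the fundamental solution $\rho$ are the self-expanders of (\ref{eq:mcfselfexpander}). Following the abstract, I will build from the family $(M_t)\subset\R^{n+1}$ a single $(n+1)$-dimensional hypersurface $\cone\subset\R^{n+2}$ that itself sits inside a mean curvature flow in $\R^{n+2}$ and that is convex precisely when $M_0$ is convex. Translating the non-negativity of its second fundamental form back into the intrinsic data of the $(M_t)$ will then produce exactly Hamilton's inequality $Z(V,V)\geq 0$.

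\textbf{Construction of the lift.} Writing $\R^{n+2}=\R^{n+1}\times\R$ with extra coordinate $y$, I expect the correct lift to be the parabolic lift
$$\cone := \{(x,y) : y > 0, \; x \in M_{y^2/2}\},$$
so that horizontal slicing at height $y$ recovers the time-$y^2/2$ slice of the flow. The scaling $t=y^2/2$ is dictated by parabolic invariance and by the requirement that self-expanders, which are the MCF analogues of $\rho$, lift to genuine Euclidean cones with apex at the origin (cones being precisely the fixed points of parabolic dilation); this is how the ``$H/(2t)$'' term in $Z$ will eventually emerge. With a suitable choice of ambient time parameter $s$, I would then verify that $\cone$ sits inside a genuine MCF solution in $\R^{n+2}$ (possibly only up to tangential diffeomorphism), by a direct chain-rule computation starting from $\partial_t F_t = -H\nu$.

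\textbf{Identification and main obstacle.} Once $\cone$ is recognised as a slice of an MCF whose initial datum is convex in $\R^{n+2}$ (which should follow from $M_0$ convex, perhaps after mollification near the tip), Huisken's theorem \cite{huisken} gives convexity of $\cone$, and hence non-negativity of its second fundamental form $\tilde h$. At a point of $\cone$ lying above $x\in M_t$, I would decompose a general tangent vector as $V+\lambda\,\partial_y$ with $V$ tangent to $M_t$, and evaluate $\tilde h(V+\lambda\partial_y, V+\lambda\partial_y)$, expecting the four summands of $Z(V,V)$ to arise respectively from the $\lambda^2$-, mixed $\lambda V$-, pure $V$-, and lift-curvature contributions. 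The main difficulty will be exactly this algebraic identification: arranging the lift and the ambient time parametrisation so that $\tilde h$ reproduces $Z$ with no spurious extra terms. A secondary technical issue is the conical singularity of $\cone$ at $y=0$, which I would handle by shifting time by a small $\varepsilon>0$, proving the estimate first with $H/(2(t-\varepsilon))$ in place of $H/(2t)$, and then sending $\varepsilon\downarrow 0$.
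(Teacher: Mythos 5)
Your overall philosophy is the same as the paper's — lift to a codimension-one object in $\R^{n+2}$, invoke preservation of convexity, and decompose the second fundamental form of the lift to recover $Z(V,V)\geq 0$. The final identification you anticipate is genuinely what happens: the paper's Theorem~\ref{lem:canonicalsecondffharnack} (due to Kotschwar) states exactly that $h^{\Gamma_N}(V+\partial_t,\,V+\partial_t) = Z(V,V)/(\sigma_N\sqrt{t})$ for the canonical self-expander $\Gamma_N = \{t^{-1/2}(x,N): x\in M_t\}$, which is the paper's analogue of your lift (with $y = N/\sqrt{t}$ rather than $y=\sqrt{2t}$, so that the surface is asymptotic to a cone as $t\to 0$).

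The gap is in the middle step: you assert that your lift ``sits inside a genuine MCF solution in $\R^{n+2}$'' and that you would verify this by a chain-rule computation, so that Huisken's theorem gives convexity. This is false, and it is precisely the obstacle the paper's proof is built to avoid. The space-time track of a mean curvature flow, however one parametrises the height in terms of $t$, is \emph{not} a slice of an MCF in $\R^{n+2}$; the paper explicitly notes in Section~\ref{sec:canharnack} that $\Gamma_N$ is only an \emph{approximate} self-expander, with $H^{\Gamma_N} + \tfrac{1}{2}\langle z,\nu^{\Gamma_N}\rangle = E_N$ where $N|E_N|$ is merely bounded. Since $\Gamma_N$ solves no exact parabolic equation, one cannot run a maximum/comparison principle on it directly, and Huisken's theorem (which is also stated only for compact hypersurfaces, whereas any such lift is non-compact) does not apply. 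The paper's route around this is genuinely different from what you propose: it never flows the space-time track at all. Instead it runs a bona fide MCF (via Ecker--Huisken entire-graph theory) starting from the literal Euclidean cone $\cone=\{t(x,N): x\in M_0,\ t\geq 0\}$ over $M_0$ — not a space-time object — obtaining a smooth self-expander $\tilde\Sigma_N$, invokes preservation of convexity for non-compact solutions (Barles--Biton--Ley, not Huisken), and then uses level-set flow / viscosity solution theory (Evans--Spruck) to identify the $C^0_{\mathrm{loc}}$ limit of the squashed graphs $\tfrac{1}{N}\tilde v_N$ with the asymptotically conical space-time track. Convexity survives that limit, passes to $\Gamma_N$ by stretching, and only then does Kotschwar's formula apply. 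Your ``mollify the tip and shift time'' fix addresses a much milder regularity issue and would not repair the central problem that your lifted surface is not an MCF slice to begin with.
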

Again one can integrate this estimate along extremal curves to obtain a  Harnack inequality for the mean curvature in the classical sense (see also \cite{hamiltonmcf}).
\begin{corollary}
  \label{cor:harnackmcf}
  Under the assumptions of Theorem \ref{thm:harnackmcf} we have for $0 < t_1 < t_2 \leq T$, $x_1 \in M_{t_1}$ and $x_2 \in M_{t_2}$:

  \begin{equation}
    \nonumber
     H \lrrb{ x_2, t_2 } \geq  H \lrrb{ x_1, t_1 } \sqrt{ \frac{ t_1 } { t_2 } } e^{- \frac{ \Delta }{ 4 } }
  \end{equation}
where
\begin{equation}
  \nonumber 
   \Delta = \inf_\gamma \int_{t_1}^{t_2} \abs{ \frac{ d \gamma}{ dt }  }^2_{M_t} dt
\end{equation}
and the infimum is taken over all $C^1$-paths $\gamma:[t_1,t_2]\to\R^n$, which remain on the surface, i.e. $\ga(t)\in M_t$, 
and with $\gamma \lrrb{ t_1 } = x_1$ and $\gamma \lrrb{ t_2 } = x_2$. Here $\abs{ \frac{ d \gamma}{ dt }  }_{M_t}$ denotes the length of the component of the velocity vector of $\gamma$ that is tangent to $M_t$.
\end{corollary}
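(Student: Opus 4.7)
The plan is to mimic the integration argument used to derive Corollary \ref{cor:harnackheat} from Corollary \ref{thm:harnackheat}, namely to exhibit a pointwise ODE inequality for $\log H$ along a time-dependent curve $\gamma(t)\in M_t$ and then integrate. Fix such a $C^1$-path from $x_1\in M_{t_1}$ to $x_2\in M_{t_2}$; writing $\gamma(t)=F_t(p(t))$ for a curve $p$ in the parameter domain $M^n$, the velocity in $\R^{n+1}$ decomposes as $\dot\gamma=-H\nu+W$, where $W=dF_t(\dot p)$ is the tangential component and $|W|=|\dot\gamma|_{M_t}$. The chain rule then yields
$$\frac{d}{dt}H(\gamma(t),t)=\frac{\partial H}{\partial t}+\skp{\nabla H,W}.$$

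The crucial trick is to feed the differential Harnack \eqref{eq:harnackmcf} the tangent vector $V=W/2$, so that the linear term $2\skp{\nabla H,V}$ exactly reproduces the gradient contribution in the chain rule above and can be cancelled. This gives the one-variable inequality
$$\frac{dH}{dt}+\frac{1}{4}h(W,W)+\frac{H}{2t}\geq 0.$$
To pass from the second fundamental form to the Euclidean length appearing in the statement, I would invoke convexity of $M_t$ pointwise: since every principal curvature $\lambda_i$ is non-negative and $H=\sum_i\lambda_i$, a one-line eigenbasis computation gives $h(W,W)\leq H|W|^2$ for every tangent vector $W$. Dividing by $H>0$ produces the clean ODE inequality
$$\frac{d}{dt}\log H\geq -\frac{1}{2t}-\frac{1}{4}|\dot\gamma|^2_{M_t}.$$

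The remainder is bookkeeping: integrate from $t_1$ to $t_2$, exponentiate, and then take the infimum over admissible $\gamma$ to replace $\int_{t_1}^{t_2}|\dot\gamma|^2_{M_t}\,dt$ by $\Delta$. The only step that is not a direct transcription of the heat-equation argument is the dual use of convexity of $M_t$: it both ensures that $H>0$, so dividing by $H$ is legitimate, and underlies the pointwise comparison $h(W,W)\leq H|W|^2$ that is needed to convert the intrinsic quadratic form appearing in the differential Harnack into the Euclidean length used in the classical statement. I expect this convexity-driven comparison, rather than the calculus, to be the conceptual heart of the argument.
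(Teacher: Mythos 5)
The paper does not actually supply a proof of this corollary; it says only that "one can integrate this estimate along extremal curves" and defers to Hamilton's original paper. Your reconstruction is the standard integration argument and is correct. The key moves are exactly right: decomposing $\dot\gamma$ into $-H\nu$ plus a tangential part $W$, feeding $V=W/2$ into $Z(V,V)\geq 0$ so that the linear term $2\skp{\nabla H,V}$ cancels the gradient term from the chain rule, and then using convexity in the form $h(W,W)\leq (\operatorname{tr} h)\,\abs{W}^2 = H\abs{W}^2$ to replace the intrinsic quadratic form by the Euclidean length $\abs{\dot\gamma}^2_{M_t}$ appearing in the statement.

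Two small points worth recording. First, dividing by $H$ requires $H>0$, not merely $H\geq 0$; convexity alone gives $H\geq 0$, but for a compact convex solution the strong maximum principle upgrades this to $H>0$ for all $t>0$, which suffices since $t_1>0$. Second, if one skips the comparison $h(W,W)\leq H\abs{W}^2$ and integrates $\frac{d}{dt}\log H \geq -\frac{1}{2t}-\frac{h(W,W)}{4H}$ directly, one obtains the slightly sharper inequality with $\Delta$ replaced by $\Delta' = \inf_\gamma\int_{t_1}^{t_2} \frac{h(W,W)}{H}\,dt \leq \Delta$; your extra step is therefore a genuine (if mild) relaxation, but it is exactly what is needed to match the Euclidean-norm formulation in the corollary as stated.
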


Harnack inequalities for geometric flows have numerous applications, e.g. 
Hamilton's Harnack estimate for mean curvature flow
can be applied to classify convex eternal solutions of the mean curvature flow, if the mean curvature assumes its space-time maximum, as translating solitons (see \cite{hamiltonmcf}). The original proof of Theorem \ref{thm:harnackmcf} directly uses a tensor maximum principle type argument.
As is the case for other equations, the first step to obtain Theorem \ref{thm:harnackmcf}
is to find the right quantity from \eqref{eq:harnackmcf}, and this 
was originally done by looking for expressions
that vanish on self-expanding solutions of \eqref{eq:mcfequation} and then  trying to combine these in an appropriate way. While this method has proved to be extremely effective, it does not give 
much geometric insight into Harnack expressions. Such an insight can be gained by considering appropriate \emph{space-time constructions} for geometric flows, as we now roughly describe.

The pioneering work in the direction of space-time constructions for the Ricci flow (see \cite{toppinglectures}) and for the mean curvature flow was done by B. Chow and S. Chu (see \cite{chowchu1, chowchu}). They managed to show that $Z \lrrb{V, V} - \frac{ H }{ 2 t }$ -- an expression that is constant along translating solitons -- approximately corresponds to the second fundamental form of 
an extreme stretching by a factor $N$ in the time direction of the so-called space-time track of the mean curvature flow.
One can take the limit as $N \rightarrow \infty$ of this second fundamental form, 
yielding exactly $Z \lrrb{V, V} - \frac{ H }{ 2 t }$. 

In the context of Ricci flow, B. Chow and D. Knopf developed this idea further by considering a form of rescaled Ricci flow in order to obtain a precise correspondence between the relevant Harnack quantity and the curvature of a degenerate space-time construction (see \cite{chowknopf} for further details). 
E. Cabezas-Rivas and P. Topping extended these ideas in \cite{estherpeter} by constructing a non-degenerate expanding space-time approximate Ricci soliton, the limit of whose curvatures gave the existing, and new, Harnack quantities. Thus Harnack inequalities correspond to the preservation of certain curvature conditions (positive curvature operator, positive complex sectional curvature etc.) under Ricci flow. In this note we will see the mean curvature flow analogue of these ideas, where the correspondence between preservation of `positive curvature' and Harnack inequalities turns out to be particularly clean and precise.

B. Kotschwar \cite{kotschwar} has recently 
considered variants of these ideas, 
recovering the Harnack quantities for a large class of curvature flows (the Harnack estimates are due to B. Andrews and K. Smoczyk, see \cite{andrewsharnack, smoczykharnack}), as the limit as $N \rightarrow \infty$ of the second fundamental form of 
certain stretched
variants of the space-time track.
Moreover he gave an alternative proof of Hamilton's Harnack estimate for the mean curvature flow by showing convexity of 
these space-time track variants.
This was achieved by proving a generalised tensor maximum principle,  applied on slices of the degenerate limit of 
the space-time track variants.

In the remainder of this paper, we give a rigorous, purely geometric 
proof of Hamilton's Harnack estimate \eqref{eq:harnackmcf}
which does not rely on any form of the maximum principle other than the classical preservation of convexity in solutions of mean curvature flow.
The mean curvature flow to which we apply this convexity-preservation is not the original flow $(M_t)$; instead we take the flow starting at a cone over the original initial surface $M_0$.
In particular, we never have to apply any maximum principle to any degenerate objects, and the whole proof is phrased purely in terms of convexity.

The remainder of the paper is organised as follows. In the next section we describe how to flow graphical space-time cones over hypersurfaces by their mean curvature. In Section \ref{sec:canharnack} we state the relationship between so-called
canonical self-expanders and the Harnack quantity for the mean curvature flow. Finally, in Section \ref{sec:geometricproof}, we put things together and show how preservation of convexity along the mean curvature flow directly yields the Harnack inequality, Theorem \ref{thm:harnackmcf}.

\section{Mean curvature flow of space-time cones}
\label{sec:mcfcones}

Let $\lrrb{M_t}_{t \in \left [0, T_{\max} \right )}$ be a compact convex solution of (\ref{eq:mcfequation}). By translating the flow within the ambient space, we may assume that $M_0$ encloses the origin in $\R^{n+1}$. For $N\geq 1$, we define the cone $\cone$ to be
the $(n+1)$-dimensional submanifold of $\R^{n+2}$ given by
\begin{equation}
  \nonumber
  \cone = \lrbrace{ t \lrrb{ x, N }: \; x \in M_0\embed\R^{n+1}, t \in [0,\infty) }. 
\end{equation}
We can view $\cone$ as the entire graph of a Lipschitz function $f_N:\R^{n+1}\to [0,\infty)$.
The cone $\cone$ becomes steeper as $N$ becomes larger and the Lipschitz constant of $f_N$ is bounded by $C(M_0)N$.

K. Ecker and G. Huisken \cite{eckerhuisken} developed a theory 
of mean curvature flow of entire graphs, that applies to the cone $\cone$. The essential properties of $\cone$ here are that it is the graph of a Lipschitz function $f_N$ on the whole of $\R^{n+1}$, and that it is `straight at infinity', which is guaranteed in particular by the fact that 
\begin{equation*}
\langle z, \nu^{\cone}\rangle = 0,
\end{equation*}
for all $z\in \cone$.
The Ecker-Huisken theory then implies the existence of a self-expander
$\tilde{ \Sigma }_N =  {\graph} (\tilde{v}_N)$, where 
$\tilde v_N:\R^{n+1}\to [0,\infty)$
has Lipschitz constant no greater than that of $f_N$ (which in turn is bounded by $C(M_0)N$, as we have observed) such that
$$\sqrt{t}\,\tilde\Sigma_N$$
defines (for $t>0$) a mean curvature flow starting at $\cone$ that is smooth away from the initial cone point.
(In particular, $\tilde \Sigma_N$ is asymptotic to $\cone$.)
See Figure \ref{fig:figure1}.

\begin{figure}[ht]
  \caption{The graphical self-expander $\tilde{\Sigma}_N$}
  \label{fig:figure1}
  
  \begin{tikzpicture}[scale=4]

  \draw  (-1,1) -- (0,0) -- (1,1);
  \draw  (-1,1) -- (-1.5,1.5);
  \draw  (1,1) -- node [very near end, below=11pt, fill=white] {$\cone$} (1.5, 1.5);

  \draw [dashed] (-1,1) .. controls (0.1,1.3) and (0,1.1) .. (1,1) .. controls (0.2,0.9) and (0,0.81) .. node [near start, above=19pt, fill=white] {$M_0$} (-1,1);

  \draw [thick] (-1.45,1.5) .. controls (0,0.2) .. node[very near end, above=9pt, fill=white] {$\tilde{\Sigma}_N$} (1.45,1.5);
  \draw [thick, dotted] (-1.45,1.5) .. controls (-0.1,1.6) and (0.1,1.55) .. (1.45,1.5) .. controls (0.1,1.35) and (0.05,1.4) .. (-1.45,1.5);

\end{tikzpicture} 


\end{figure}
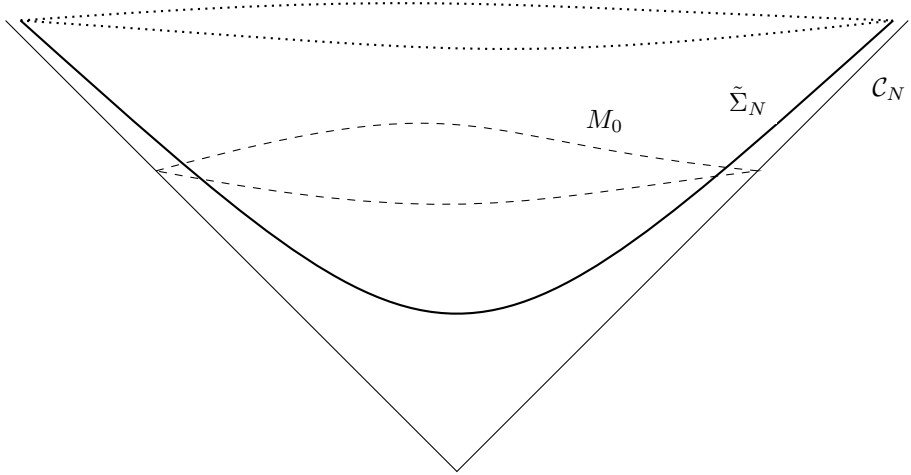

We can also say something useful about the infimum of $\tilde v_N$; more precisely we can argue that 
\begin{equation}
\label{min_bd}
\min \tilde v_N\leq C(M_0,n)N.
\end{equation}
To see this, let $d(M_0)>0$ be defined to be half the radius of the largest sphere centred at $(0,1)\in\R^{n+1}\times\R$ that does not intersect the region 
$$\lrbrace{ t \lrrb{ x, y }: \; x \in M_0\embed\R^{n+1}, t \in [0,\infty), y<1 }$$
below the cone $C_1$.
Then for all $N\geq 1$, we see that the $(n+1)$-sphere centred at $(0,N)\in\R^{n+1}\times\R$ of radius $d(M_0)$ will not intersect the cone $\cone$, or equivalently, that for all $h>0$, the 
sphere centred at $(0,h)\in\R^{n+1}\times\R$ of radius $\frac{h}{N}d(M_0)$ will not intersect the cone $\cone$. If we then evolve these spheres by mean curvature flow at the same time as we evolve $\cone$, the spheres will exist for a time $T_h:=\frac{h^2d^2}{2(n+1)N^2}$, 
(see, e.g. \cite{eckermcf}) and the comparison principle (see, e.g. 
\cite{eckermcf}) tells us that the evolution of $\cone$ cannot make it past the evolution of the spheres and go above the point $(0,h)$ for all $t\in [0,T_h]$.
In particular, we find that at time $t=1$, the evolution of $\cone$ cannot have gone above the point $(0,\sqrt{2(n+1)}\frac{N}{d})$, which is a statement a little stronger than \eqref{min_bd}.

In conclusion, we have established:
\begin{lemma}
\label{lem:selfexpexis}
For any $N \geq 1$ there exists a smooth graphical self-expander $\tilde{ \Sigma }_N =  {\graph} (\tilde{v}_N)$ with the same Lipschitz constant as $\cone$, in particular
$$\sup_{\R^{n+1}}\abs{ D \tilde{ v }_N  } \leq C(M_0) N,$$
and with controlled infimum
\begin{equation}
\label{inf_bd}
0\leq \min_{\set{R}^{n+1} } \tilde{v}_N  \leq C(M_0,n)N,
\end{equation}
such that $\sqrt{t}\,\tilde\Sigma_N$
defines (for $t>0$) a graphical mean curvature flow starting at $\cone$ that is smooth throughout the flow, except at the cone point at time $0$.
In particular, $\tilde \Sigma_N$ is asymptotic to $\cone$.
\end{lemma}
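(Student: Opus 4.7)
The plan is to build $\tilde\Sigma_N$ as the time-$1$ slice of an Ecker--Huisken mean curvature flow starting at $\cone$, and then deduce the stated bounds by sphere comparison. First I would observe that $\cone=\graph(f_N)$, where $f_N:\R^{n+1}\to[0,\infty)$ is globally Lipschitz with $\sup|Df_N|\leq C(M_0)N$, and that the relation $\langle z,\nu^{\cone}\rangle=0$ for $z\in\cone$ supplies exactly the ``straight at infinity'' condition required by \cite{eckerhuisken}. Their theory then produces a smooth graphical mean curvature flow $\Sigma_t^N=\graph(v^N(\cdot,t))$, $t>0$, starting at $\cone$, and the Ecker--Huisken gradient estimate preserves the Lipschitz bound $\sup_{\R^{n+1}}|Dv^N(\cdot,t)|\leq C(M_0)N$ for all $t>0$.

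Next I would promote this flow to a self-expander by exploiting the scale invariance of the initial datum. Since $\lambda\,\cone=\cone$ for every $\lambda>0$, the rescaled family $\lambda^{-1}\Sigma_{\lambda^2 t}^N$ is a second smooth graphical mean curvature flow issuing from $\cone$ with the same Lipschitz bound; uniqueness within the Ecker--Huisken class then forces $\Sigma_t^N=\sqrt{t}\,\Sigma_1^N$. Setting $\tilde\Sigma_N:=\Sigma_1^N=\graph(\tilde v_N)$ gives a solution of the self-expander equation \eqref{eq:mcfselfexpander}, asymptotic to $\cone$, and inheriting the Lipschitz estimate on $\tilde v_N$.

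For the two-sided control on $\tilde v_N$, I would argue by sphere comparison. The lower bound $\tilde v_N\geq 0$ is immediate, since $\{y=0\}\subset\R^{n+2}$ is a stationary mean curvature flow and $\cone\subset\{y\geq 0\}$, so the avoidance principle preserves this. For the upper bound I would implement the barrier sketch in the text, using the geometric constant $d:=d(M_0)>0$. Rescaling the single sphere at height $1$ by a factor $h$ shows that for every $h>0$ the $(n+1)$-sphere of radius $hd/N$ centred at $(0,h)\in\R^{n+1}\times\R$ is disjoint from $\cone$ and lies above it; each such sphere persists under mean curvature flow up to time $T_h=h^2 d^2/(2(n+1)N^2)$, and the comparison principle prevents $\sqrt t\,\tilde\Sigma_N$ from crossing it before it vanishes. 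Choosing $h=\sqrt{2(n+1)}\,N/d$ makes $T_h=1$, so that $\tilde\Sigma_N$ must contain a point of height at most $C(M_0,n)N$, which gives \eqref{inf_bd}.

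The main obstacle I expect is the existence and uniqueness half: one must invoke \cite{eckerhuisken} for initial data with a cone-point singularity at the origin, secure the smoothness of the flow away from the apex, and extract enough uniqueness in the graphical-with-Lipschitz-bound class to force self-similarity from scale invariance. Once the flow is in hand the self-expander equation drops out of scale invariance, and the bounds on $\tilde v_N$ reduce to the elementary sphere-barrier comparisons already outlined in the excerpt.
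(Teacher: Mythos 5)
Your proposal follows the paper's own route very closely: both arguments invoke Ecker--Huisken's theory of entire graphical mean curvature flow to produce the flow issuing from $\cone$, carry the Lipschitz gradient bound through that theory, and then prove \eqref{inf_bd} by exactly the same sphere-barrier comparison (the sphere of radius $hd/N$ centred at $(0,h)$ persists to time $T_h=h^2d^2/(2(n+1)N^2)$, and choosing $h$ so that $T_h=1$ gives the bound). The one place where you diverge slightly is in how the self-expander property is obtained: the paper simply asserts that the Ecker--Huisken theory directly delivers a self-expander $\tilde\Sigma_N$ (their framework for cone-like initial data yields the self-similar solution), whereas you reconstruct self-similarity from the scale invariance $\lambda\cone=\cone$ together with a uniqueness statement for entire Lipschitz graphical flows. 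That route is sound in outcome, but as you yourself flag, it shifts the burden onto a nontrivial uniqueness result in the Lipschitz-graph class (and onto justifying smoothness away from the apex), which the paper avoids by taking the self-expander as the primary output of \cite{eckerhuisken} rather than deducing it a posteriori. If you want to make the scale-invariance argument fully rigorous you would need to cite a uniqueness theorem covering merely Lipschitz, straight-at-infinity initial data; otherwise it is cleaner to appeal, as the paper does, to Ecker--Huisken's construction of the self-similar flow itself.
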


Since $\tilde{ \Sigma }_N =  \graph(\tilde{ v }_N)$ becomes steeper and steeper as $N \rightarrow \infty$, we squash it down by defining 
\begin{equation}
  \label{eq:squaseddowngraph}
  v_N := \frac{ 1 }{ N } \tilde{ v }_N.
\end{equation} 
Now we can take the limit of the functions $v_N$ as $N$ gets large, and level-set flow theory gives us a precise notion of what the limit is:

\begin{proposition}
 \label{prop:c0convergence}
There exists a sequence $N_n\to\infty$ such that $(v_{N_n})_{n \in \set{ N } }$ converges locally in $C^0$ to a continuous limit $v_\infty$. The graph of $v_\infty$ is the asymptotically conical space-time track defined to be
\begin{equation}
  \label{eq:expspacetimetrack}
  \lrbrace{ t^{- \frac{ 1 }{ 2 } } \lrrb{ x, 1 }: \; x \in M_t, t \in \left (0, T_{\max} \right ] }. 
\end{equation}
\end{proposition}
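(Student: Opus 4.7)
My plan has two steps: first apply a compactness argument to extract a convergent subsequence, then identify the limit using convergence properties of mean curvature flow.

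For compactness, Lemma \ref{lem:selfexpexis} ensures that $v_N := \tilde v_N / N$ is Lipschitz with constant at most $C(M_0)$ independent of $N$, and that $0 \le \min v_N \le C(M_0, n)$. The family $(v_N)_{N \ge 1}$ is therefore equicontinuous and locally uniformly bounded on $\R^{n+1}$, so Arzel\`a-Ascoli yields a subsequence $v_{N_n}$ converging locally uniformly to a Lipschitz function $v_\infty$, and hence $\graph(v_{N_n}) \to \graph(v_\infty)$ locally in Hausdorff distance.

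To identify the limit, fix $s > T_{\max}^{-1/2}$ and set $\tau := 1/s^2 \in (0, T_{\max})$. The key claim I would establish is the containment $sM_\tau \subseteq \lrbrace{v_\infty = s}$. Since the hypersurfaces $\{sM_{1/s^2}\}_{s > T_{\max}^{-1/2}}$ foliate $\R^{n+1}$ away from the single point $p^*$ to which $(M_t)$ shrinks at time $T_{\max}$, and since level sets of the continuous function $v_\infty$ are disjoint, this containment for every such $s$ forces the equalities $\lrbrace{v_\infty = s} = sM_{1/s^2}$ and $v_\infty(p^*) = T_{\max}^{-1/2}$, which is exactly the identification of $\graph(v_\infty)$ with the space-time track in the statement. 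To prove the key containment, recall that $\sqrt{t}\,\tilde\Sigma_N$ is the mean curvature flow in $\R^{n+2}$ starting at $\cone$. Translate the ambient space by $-(0, Ns)$, bringing the horizontal slice of $\tilde\Sigma_N$ at height $Ns$ down to height $0$. In any slab $\lrbrace{|z| \le R}$, the translated initial cone $\cone - (0, Ns)$ consists of points $((s + r')x, Nr')$ with $|r'| \le R/N$ and $x \in M_0$, and so converges in Hausdorff distance, as $N \to \infty$, to the cylinder $sM_0 \times \R$. This cylinder evolves under mean curvature flow as $sM_{t/s^2} \times \R$, because the vertical factor decouples and $sM_0$ flows to $sM_{t/s^2}$ by parabolic rescaling of the original flow $(M_t)$. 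By stability of the level-set flow under local Hausdorff convergence of initial data, the translated flows at time $t = 1$ converge locally in Hausdorff distance to $sM_\tau \times \R$, and slicing at height $0$ gives $\lrbrace{v_{N_n} = s} \to sM_\tau$; combined with the locally uniform convergence $v_{N_n} \to v_\infty$, this yields $sM_\tau \subseteq \lrbrace{v_\infty = s}$.

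The main obstacle is the rigorous invocation of stability of mean curvature flow under non-smooth, locally Hausdorff-convergent initial data -- the cone has a singular tip and the limiting cylinder is non-compact -- which is precisely why the authors appeal to level-set flow theory. In practice the local convergence inside a slab $\lrbrace{|z| \le R}$ can be controlled by comparison with shrinking-sphere barriers, in the same spirit as the argument establishing \eqref{min_bd} during the construction of $\tilde\Sigma_N$.
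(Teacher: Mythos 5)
The compactness half of your argument coincides with the paper's: both use the uniform gradient bound from Lemma \ref{lem:selfexpexis} to give Lipschitz constants for $v_N$ independent of $N$, combine this with the infimum bound \eqref{inf_bd}, and invoke Arzel\`a--Ascoli. The identification of the limit is where you genuinely diverge. The paper's route is PDE-theoretic: it writes down the graphical MCF equation for $\tilde V_N$, observes that the squashed $V_{N_n}$ solve the ``$\varepsilon = 1/N_n$'' regularisation of the level-set flow equation, invokes the Evans--Spruck approximation lemma to conclude that $V_\infty$ is a weak (viscosity) solution of the level-set equation with initial cone $C_1$, and then uses Evans--Spruck Theorem 6.1 to identify each level set of $V_\infty(\cdot,s)$ with the classical smooth evolution of the corresponding level set $\alpha M_0$ of the cone. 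Your route is slice-by-slice: translate down by $(0,N_ns)$, argue the initial cone locally Hausdorff-converges to the cylinder $sM_0 \times \R$, flow for time $1$, slice at height $0$, and pass to the limit. These really are two different mechanisms for extracting the level sets.

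Your approach has two gaps, one of which you already flag. The flagged gap is the stability statement: ``stability of the level-set flow under local Hausdorff convergence of initial data'' is not an off-the-shelf theorem. The Evans--Spruck/Chen--Giga--Goto theory gives continuous dependence of viscosity solutions under $C^0_{loc}$ convergence of the initial \emph{level-set functions}, not local Hausdorff convergence of a single level set; translating one into the other is delicate here because the translated cone is singular at a point escaping to infinity and the limiting cylinder is non-compact. Moreover the flow must be controlled for the full unit time, not just a short time, before slicing, and one must rule out that information propagating in from outside the slab $\{|z| \le R\}$ affects the height-zero slice; barriers can plausibly do this, but that is exactly the technical work the paper routes around by invoking the approximation lemma once and then appealing to the published identification with classical flow.

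The unflagged gap is the foliation claim. You assert that $\{sM_{1/s^2}\}_{s > T_{\max}^{-1/2}}$ foliates $\R^{n+1}$ minus a point, and you use this to upgrade the containment $sM_{1/s^2} \subseteq \{v_\infty = s\}$ to equality. Disjointness is in fact free once you have the containment (two level sets of a function cannot share a point), but \emph{covering} is not: the family $\tau \mapsto \tau^{-1/2} M_\tau$ need not be monotone in the naive scaling sense, since $M_\tau$ shrinks to a point $p^*$ that generally is not the origin, so the scaling $\tau^{-1/2}$ and the shrinking of $M_\tau$ act about different centres. That this family nevertheless sweeps out all of $\R^{n+1}$ minus a point is exactly what the paper obtains from the Evans--Spruck theorem, because the level sets of the function $V_\infty(\cdot,1)$ automatically partition $\R^{n+1}$. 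If you wanted to stay within your framework you would need an additional argument, e.g.\ upgrading your slice convergence $\{v_{N_n} = s\} \to sM_{1/s^2}$ to be locally uniform in $s$ so that, for each $p$, taking $s = v_\infty(p) = \lim v_{N_n}(p)$ forces $p \in sM_{1/s^2}$. As written, the foliation claim is an unproved ingredient, and it is not an elementary geometric fact.
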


\begin{proof}

Lemma \ref{lem:selfexpexis} provides a suitable derivative bound for $\tilde{ v }_N$ in order to have a uniform bound on $ \abs{ D v _N }$, independently of $N$. Together with the infimum bound \eqref{inf_bd}
from Lemma \ref{lem:selfexpexis}, this implies that there is a sequence $N_n\to\infty$ such that $\lrrb{ v_{ N_n } }_{n \in \set{ N } }$  converges in $C^0_{loc} \lrrb{ \set{ R }^{n+1}}$ to a continuous limit $v_\infty$.

Lemma \ref{lem:selfexpexis} tells us that $\sqrt{t}\,\tilde\Sigma_N$ is a mean curvature flow that is the graph of a function that we will  call $\tilde V_N:\R^{n+1}\times (0,\infty)\to [0,\infty)$, with $\tilde V_N(\cdot,1)=\tilde v_N$, or more generally
$\tilde V_N(x,t)=\sqrt{t}\,\tilde v_N(x/\sqrt{t})$. 
We make the analogous extensions of $v_N$ and $v_\infty$ to $V_N$ and $V_\infty$ respectively.

The equation of graphical mean curvature flow (see for example 
\cite{eckermcf}) with respect to a time coordinate $s\in (0,\infty)$, is
\begin{equation}
  \nonumber
  \frac{ \partial }{ \partial s } \tilde{ V }_{N_n} = \sqrt{ 1 + \abs{ D \tilde{ V }_{N_n} }^2 } \te{ div } \lrrb{ \frac{ D \tilde{ V }_{N_n} }{ \sqrt{ 1 + \abs{ D \tilde{ V }_{N_n} }^2 }  }  },
\end{equation}
so we see that $V_{N_n}$ solves 
\begin{equation}
  \nonumber
  \frac{ \partial }{ \partial s }  V_{N_n} =  \sqrt{ \frac{ 1 }{ N_n^2 } + \abs{ D V_{N_n} }^2 }  \te{ div } \lrrb{ \frac{ D  V_{N_n} }{ \sqrt{ \frac{ 1 }{ N_n^2 } + \abs{ D V_{N_n} }^2 } }  }.
\end{equation}
We can now apply an approximation lemma (see \cite[proof of Theorem 4.2]{evansspruck}) in order to show that the limit 
$V_\infty$ corresponds to a weak solution of the level-set flow equation (see \cite{evansspruck, chengigagoto})
\begin{equation}
  \nonumber
   \frac{ \partial }{ \partial s } V_\infty = \lrrb{ I - \frac{ D V_\infty \otimes D V_\infty }{ \abs{ D V_\infty }^2 } } : D^2 V_\infty.  \\
\end{equation}

Thus $V_\infty$  is a weak solution of the level-set flow equation with the cone $C_1$ as an initial condition. Therefore for each 
height $\alpha>0$, 
the smooth $\alpha$-level sets of $V_\infty(\cdot,s)$ agree with the classical smooth evolution of the compact $\alpha$-level set of $C_1$ with respect to the time parameter $s$ (see \cite[Theorem 6.1]{evansspruck}). But a level set $\alpha M_0$ of $C_1$ at height 
$\alpha > 0$ gives rise to the parabolically rescaled evolution 
$\alpha M_{ \alpha^{-2}s}$, and by setting $s=1$, we see that the $\alpha$-level set of $v_\infty$ will be $\alpha M_{ \alpha^{-2}}$,
which is also the $\alpha$-level set of the asymptotically conical space-time track \eqref{eq:expspacetimetrack}. Hence the graph of $v_\infty$ must be the asymptotically conical space-time track as desired.
\end{proof}

\section{Canonical self-expanders and the Harnack quantity}
\label{sec:canharnack}

For a solution $\lrrb{M_t}_{t \in [0, T]}$ of (\ref{eq:mcfequation}) there is an associated space-time construction by B. Kotschwar (see \cite{kotschwar}), which we call a canonical self-expander by analogy with the Ricci flow case (see \cite{estherpeter}). The canonical self-expander $\Gamma_N$ can be defined for a
parameter $N > 0$ as
\begin{equation}
  \label{eq:canselfexp}
  \Gamma_N = \lrbrace{ t^{- \frac{ 1 }{ 2 } } \lrrb{ x, N }: \; x \in M_t, t \in (0,T] }.
\end{equation} 
Suppose now that the hypersurfaces $\lrrb{M_t}_{t \in [0, T]}$ have uniformly bounded curvature. Then $\Gamma_N$ is an approximate self-expander of (\ref{eq:mcfequation}), i.e.
\begin{equation}
  \label{eq:canonicalequation}
  H^{\Gamma_N} + \frac{ \skp{ z, \nu^{\Gamma_N} } }{ 2 } \approx 0
\end{equation}
for $z \in \Gamma_N$ (see \cite[4.1]{kotschwar}). By this we mean that $H^{\Gamma_N} + \frac{ \skp{ z, \nu^{\Gamma_N} } }{ 2 } = E_N$, where $N \abs{ E_N }$ is bounded locally uniformly, independently of $N$ (we continue to use this notation).

Furthermore $\Gamma_N$ is asymptotic to $\cone$ and we have (see 
Kotschwar \cite{kotschwar}):

\begin{theorem}
  \label{lem:canonicalsecondffharnack}
The second fundamental form of $\Gamma_N$, which we call 
$h^{\Gamma_N}$, satisfies
\begin{equation}
  \label{eq:canonicalsecondff}
  h^{ \Gamma_N } \lrrb{ V + \frac{ \partial } { \partial t }, V + \frac{ \partial } { \partial t } } = \frac{ Z \lrrb{ V, V } }{ \sigma_N \sqrt{t} } \approx \frac{ Z \lrrb{ V, V } }{ \sqrt{ t } }
\end{equation}
for any tangent vector $V \in TM^n$, where the constant $\sigma_N>0$
satisfies
$\sigma_N\to 1$ as $N\to\infty$.
\end{theorem}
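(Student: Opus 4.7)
The plan is to parameterize $\Gamma_N$ directly by $\Phi(p,t) = t^{-1/2}(F_t(p), N)$ on $M^n \times (0, T]$, compute the unit normal $\nu^{\Gamma_N}$, and then identify each component of the ambient second fundamental form of $\Gamma_N$ with a term of $Z(V,V)$.

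I first differentiate $\Phi$ to record the tangent structure. The spatial tangent vectors are $d\Phi(V) = t^{-1/2}(dF_t(V), 0)$, and using the mean curvature flow equation $\partial_t F_t = -H\nu$ gives the temporal tangent vector $\Phi_t = -\tfrac{1}{2t}\Phi - t^{-1/2}(H\nu, 0)$. Since any unit normal to $\Gamma_N$ must annihilate every $(dF_t(V),0)$, it is forced to take the form $\nu^{\Gamma_N} = \sigma_N^{-1}(-\nu, b_N)$ with $\sigma_N = \sqrt{1+b_N^2}$; imposing orthogonality against $\Phi_t$ pins down $b_N = (\langle x, \nu\rangle + 2tH)/N$. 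Under the hypothesis of locally uniformly bounded curvature, $b_N = O(1/N)$, so $\sigma_N \to 1$ as $N \to \infty$.

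Next I compute $h^{\Gamma_N}(V + \partial_t, V + \partial_t) = \langle \Phi_{VV} + 2\Phi_{Vt} + \Phi_{tt}, \nu^{\Gamma_N}\rangle$ term by term and match each piece with a summand of $Z(V,V)$. The spatial second derivative contributes $\sigma_N^{-1} t^{-1/2} h(V,V)$ by the very definition of the second fundamental form of $M_t$. The mixed second derivative, after noting that $\Phi_V$ is tangent to $\Gamma_N$ and that $\nabla_V \nu$ is tangent to $M_t$, reduces to the single surviving term $\sigma_N^{-1} t^{-1/2} \langle \nabla H, V\rangle$. The temporal second derivative, obtained by differentiating $\Phi_t$ once more in $t$ and using the known evolution $\partial_t \nu = \nabla H$ (which is tangential to $M_t$ and hence drops out in the pairing), produces $\sigma_N^{-1} t^{-1/2}(\partial_t H + H/(2t))$. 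Summing yields $h^{\Gamma_N}(V+\partial_t, V+\partial_t) = \sigma_N^{-1} t^{-1/2} Z(V,V)$, and the $\approx$ in the statement is simply $\sigma_N \to 1$.

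The main obstacle is the bookkeeping for $\Phi_{tt}$: three distinct contributions---from the two time derivatives of the $t^{-1/2}$ prefactor, from the time derivative of $H$ itself, and from the pairing of $\Phi$ with the $(n+2)$-th component of $\nu^{\Gamma_N}$---must combine with exactly the right coefficients to produce $\partial_t H + H/(2t)$. The pivotal identity that makes this work is $\langle \Phi, \nu^{\Gamma_N}\rangle = 2\sigma_N^{-1} t^{1/2} H$, which is merely the defining equation for $b_N$ rewritten and is precisely the (exact) self-expander relation encoded in the construction of $\Gamma_N$.
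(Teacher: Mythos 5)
The paper itself does not prove this theorem; it cites it directly from Kotschwar's preprint (``Furthermore $\Gamma_N$ is asymptotic to $\cone$ and we have (see Kotschwar [KO]): \dots''), so there is no in-paper argument to compare against. Your computation, however, is an essentially correct and self-contained derivation, following the route one would expect from Kotschwar's space-time construction: parameterize $\Gamma_N$ by $\Phi(p,t)=t^{-1/2}(F_t(p),N)$, determine the normal $\nu^{\Gamma_N}=\sigma_N^{-1}(-\nu,b_N)$ by forcing orthogonality against the two families of tangent vectors, and then pair the three pieces of the ambient Hessian of $\Phi$ with $\nu^{\Gamma_N}$. I checked the bookkeeping: with $b_N=(\langle x,\nu\rangle+2tH)/N$, the identity $\langle\Phi,\nu^{\Gamma_N}\rangle=2\sigma_N^{-1}\sqrt{t}\,H$ holds, the $\langle\Phi_t,\nu^{\Gamma_N}\rangle=0$ term drops from $\Phi_{tt}$, and the coefficients of $\tfrac12 t^{-3/2}H$ and $t^{-1/2}\partial_tH$ combine to give $\sigma_N^{-1}t^{-1/2}\bigl(\partial_tH+\tfrac{H}{2t}\bigr)$, while the spatial and mixed pieces give $\sigma_N^{-1}t^{-1/2}h(V,V)$ and $2\sigma_N^{-1}t^{-1/2}\langle\nabla H,V\rangle$ respectively; these sum to $\sigma_N^{-1}t^{-1/2}Z(V,V)$ as required.

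One point worth flagging against the statement of the theorem: your $\sigma_N=\sqrt{1+b_N^2}$ is a \emph{function} of the point (through $\langle x,\nu\rangle$ and $tH$), not a constant, whereas the theorem calls $\sigma_N$ a constant. This is almost certainly loose language in the paper -- the $\approx$ notation was defined a few lines earlier to mean ``the difference is $O(1/N)$ locally uniformly'', and your $\sigma_N\to1$ holds in exactly that locally uniform sense under the hypothesis of bounded curvature -- but you should say explicitly that $\sigma_N$ depends on the point and converges to $1$ locally uniformly, rather than adopt the paper's wording. Beyond that, be a bit more careful with sign conventions: you should state that $h^{\Gamma_N}$ and $h$ are taken with the sign convention making them non-negative for convex hypersurfaces, and check once that the choice of sign in $\nu^{\Gamma_N}=\sigma_N^{-1}(-\nu,b_N)$ is the one compatible with that convention, since otherwise the overall sign of the identity could be off.
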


\section{A geometric proof of Hamilton's Harnack estimate}
\label{sec:geometricproof}

We can now deduce Theorem \ref{thm:harnackmcf} directly from a chain of convexity statements
as follows:

\renewcommand{\labelitemi}{$\bigstar$} 

\begin{proof}
The starting assumption is that 
\begin{itemize}
\item
\emph{$M_0$ is convex,}
\end{itemize}
which immediately implies that 
\begin{itemize}
\item
\emph{The cone  $\cone$  over $M_0$ is convex.}
\end{itemize}
Since convexity is preserved under mean curvature flow
(see in particular \cite[Theorem 10.2]{barlesconvexity})
we find that
\begin{itemize}
\item
\emph{$\tilde{ \Sigma }_N = \graph(\tilde{v}_N)$  is convex,}
\end{itemize}
and since convexity is preserved under squashing in one direction, we deduce that
\begin{itemize}
\item
\emph{$\graph(v_N) = \graph(\frac{ 1 }{ N }\tilde{ v }_N)$
is convex.}
\end{itemize}
Now, $C^0_{loc}$-limits of convex functions are convex, and 
$v_\infty$ is the $C^0_{loc}$ limit of the convex functions $v_{N_n}$. Therefore
\begin{itemize}
\item
\emph{$v_\infty$ 
is convex,}
\end{itemize}
or equivalently, by Proposition \ref{prop:c0convergence},
\begin{itemize}
\item
\emph{The asymptotically conical space-time track \eqref{eq:expspacetimetrack} is convex,}
\end{itemize}
and by preservation of convexity under stretching in one direction, this implies that
\begin{itemize}
\item
\emph{The canonical self-expander $\Gamma_N$ is convex.}
\end{itemize}
By \eqref{eq:canonicalsecondff} we can then deduce the Harnack inequality
\begin{itemize}
\item
\emph{$Z \lrrb{ V, V } \geq 0$,}
\end{itemize}
as desired.
%
%
%
%
%
\end{proof}

\noindent
{\bf Acknowledgements}: 
This work was supported by The Leverhulme Trust.
The second author was also supported by EPSRC grant EP/K00865X/1.
Thanks to Klaus Ecker, Gerhard Huisken and Esther Cabezas-Rivas for conversations on this topic.

{\sc mathematics institute, university of Warwick, Coventry, CV4 7AL,
UK}\\


\begin{thebibliography}{YE}

\bibitem[AN]{andrewsharnack} {\sc B. Andrews} {\em Harnack inequalities for evolving hypersurfaces},  Math. Z. 217 (1994), 179-197
\bibitem[BBL]{barlesconvexity}{\sc G. Barles, S. Biton and O. Ley} {\em A geometrical approach to the study of unbounded solutions of quasilinear parabolic equations}, Arch. Rat. Mech. An. 162 (2002), 287-325
\bibitem[CC1]{chowchu1} {\sc B.Chow, S. Chu} {\em A geometric interpretation of Hamilton's Harnack inequality for the Ricci flow}, Math. Res. Let. 2 (1995), 701-718
\bibitem[CC2]{chowchu} {\sc B. Chow, S. Chu} {\em Space-time formulation of Harnack inequalities for curvature flows of hypersurfaces}, J. Geom. An. 11 (2001), 219-231
\bibitem[CGG]{chengigagoto} {\sc Y.G. Chen, Y. Giga, S. Goto} {\em Uniqueness and existence of viscosity solutions of generalized mean curvature flow equations}, J. Diff. Geom. 33 (1991), 749-786
\bibitem[CK]{chowknopf} {\sc B. Chow, D. Knopf} {\em New Li-Yau-Hamilton inequalities for the Ricci flow via the space-time approach}, J. Diff. Geom. 60 (2002), 1-54
\bibitem[CT]{estherpeter} {\sc E. Cabezas-Rivas, P. Topping} {\em The canonical expanding soliton and Harnack inequalities for Ricci flow}, 
Trans. Amer. Math. Soc. 364 (2012) 3001-3021.
\bibitem[EC]{eckermcf} {\sc K. Ecker} {\em Regularity theory for mean curvature flow}, Birkh{\"a}user (2004)
\bibitem[EH1]{eckerhuisken} {\sc K. Ecker, G. Huisken}, {\em Mean curvature evolution of entire graphs}, Ann. Math., 130 (1989), 453-471
\bibitem[ES]{evansspruck} {\sc L. C. Evans, J. Spruck} {\em Motion of level sets by mean curvature I}, J. Diff. Geom. 33 (1991), 635-681
\bibitem[HA1]{hamiltonrf} {\sc R. Hamilton} {\em The Harnack estimate for the Ricci flow}, J. Diff. Geom. 37 (1993), 225-243
\bibitem[HA2]{hamiltonhe} {\sc R. Hamilton} {\em A matrix Harnack estimate for the heat equation}, Comm. Anal. Geom. 1 (1993), 113-126
\bibitem[HA3]{hamiltonmcf} {\sc R. Hamilton} {\em The Harnack estimate for the mean curvature flow}, J. Diff. Geom. 41 (1995), 215-226
\bibitem[HU]{huisken} {\sc G. Huisken} {\em Flow by mean curvature of convex surfaces into spheres}, J. Diff. Geom. 20 (1984), 237-266
\bibitem[KO]{kotschwar} {\sc B. Kotschwar} {\em Harnack inequalities for evolving convex hypersurfaces from the space-time perspective}, preprint (2009)
\bibitem[LY]{liyau} {\sc P. Li, S.T. Yau} {\em On the parabolic kernel of the Schr{\"o}dinger operator}, Acta Math. 156 (1986), 153-201
\bibitem[SM]{smoczykharnack} {\sc K. Smoczyk} {\em Harnack inequalities for curvature flows depending on mean curvature}, N.Y. J. Math. (1997), 103-118.
\bibitem[TO]{toppinglectures}{\sc P. Topping} {\em Lectures on the Ricci flow}, L.M.S. Lecture note series {\bf 325} Cambridge University Press (2006)

\end{thebibliography}
\end{document}